\newtheorem{theorem}{Theorem}[section]
\newtheorem*{theorem*}{Theorem}
\newtheorem{corollary*}{Corollary}
\newtheorem{lemma}[theorem]{Lemma}
\newtheorem{proposition}[theorem]{Proposition}
\theoremstyle{definition}
\newtheorem{definition}[theorem]{Definition}
\newtheorem{example}[theorem]{Example}
\theoremstyle{remark}
\newtheorem{remark}[theorem]{Remark}
\numberwithin{equation}{section}
\renewcommand{\theenumi}{\ensuremath{(\roman{enumi})}}
\renewcommand{\p@enumii}{}
\newcommand{\vnorm}[1]{\left\|#1\right\|}
\newcommand{\acr}{\newline\indent}
\begin{document}
\title[Decomposition in direct sum]{Decomposition in direct sum of seminormed vector spaces and Mazur--Ulam theorem}

\author{Oleksiy Dovgoshey}
\address{\textbf{Oleksiy Dovgoshey}\acr
Department of Theory of Functions \acr
Institute of Applied Mathematics and Mechanics of NASU \acr
84100 Slovyansk, Ukraine, and\acr
Institut f\"{u}r Mathematik Universit\"{a}t zu L\"{u}beck\acr 
D-23562 L\"{u}beck, Deutschland}
\email{oleksiy.dovgoshey@gmail.com}

\author{J\"{u}rgen Prestin}
\address{\textbf{J\"{u}rgen Prestin}\acr
Institut f\"{u}r Mathematik Universit\"{a}t zu L\"{u}beck\acr 
D-23562 L\"{u}beck, Deutschland}
\email{prestin@math.uni-luebeck.de}

\author{Igor Shevchuk}
\address{\textbf{Igor Shevchuk}\acr
Faculty of Mechanics and Mathematics\acr
Tars Shevchenko National University of Kyiv\acr 
01601 Kyiv, Ukraine}
\email{shevchuk@univ.kiev.ua}

\subjclass[2020]{Primary 46A03, Secondary 46B04}

\keywords{seminormed vector space, isometric embedding, direct sum, Mazur--Ulam theorem}

\begin{abstract}
It was proved by S. Mazur and S. Ulam in 1932 that every isometric surjection between normed real vector spaces is affine. We generalize the Mazur--Ulam theorem and find necessary and sufficient conditions under which distance-preserving mappings between seminormed real vector spaces are linear.
\end{abstract}

\maketitle

\section{Introduction}
\label{sec1}

The following result was obtained by S. Mazur and S. Ulam in \cite{MU1932CRASP}.

\begin{theorem}\label{t1.5}
Let \(X\) and \(Y\) be normed real vector spaces. Then every isometric bijection \(X \to Y\) is affine.
\end{theorem}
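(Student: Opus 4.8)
The plan is to reduce the statement to the assertion that $f$ preserves midpoints, that is, $f\bigl(\tfrac12(a+b)\bigr)=\tfrac12\bigl(f(a)+f(b)\bigr)$ for all $a,b\in X$, and then to prove this purely metrically. First I would normalise: replacing $f$ by $x\mapsto f(x)-f(0)$, which is again an isometric bijection, it suffices to treat the case $f(0)=0$ and show that such an $f$ is linear. Granting midpoint preservation, the choice $b=0$ gives $f(a/2)=\tfrac12 f(a)$, hence $f(a+b)=2f\bigl(\tfrac12(a+b)\bigr)=f(a)+f(b)$, so $f$ is additive; additivity forces $\mathbb{Q}$-homogeneity in the usual way, and since an isometry is Lipschitz, for fixed $x$ the continuous maps $t\mapsto f(tx)$ and $t\mapsto tf(x)$ agree on $\mathbb{Q}$, hence on $\mathbb{R}$. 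Thus $f$ is linear and the original map is affine.

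The core is a description of the midpoint that survives under isometries. Fix $a,b\in X$ and put $m=\tfrac12(a+b)$ and $r=\tfrac12\vnorm{a-b}$. Let $S_1=\{x\in X:\vnorm{x-a}=\vnorm{x-b}=r\}$, and recursively $S_{n+1}=\{x\in S_n:\vnorm{x-y}\le\tfrac12\operatorname{diam}S_n\text{ for all }y\in S_n\}$. I claim $\bigcap_n S_n=\{m\}$. One checks by induction that $m\in S_n$ for every $n$: the point reflection $\sigma\colon x\mapsto a+b-x$ is an isometry of $X$ with $\sigma(a)=b$, $\sigma(b)=a$, $\sigma(m)=m$, so $\sigma(S_1)=S_1$, and since $S_{n+1}$ is defined from $S_n$ by metric conditions only, $\sigma(S_n)=S_n$ for all $n$; then for $y\in S_n$ we get $2\vnorm{y-m}=\vnorm{y-\sigma(y)}\le\operatorname{diam}S_n$, whence $m\in S_{n+1}$. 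On the other hand $\operatorname{diam}S_{n+1}\le\tfrac12\operatorname{diam}S_n$ by construction and $\operatorname{diam}S_1\le\vnorm{a-b}$, so $\operatorname{diam}S_n\to0$, and the nonempty intersection $\bigcap_n S_n$ can therefore contain only one point, namely $m$.

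Finally I transfer this through $f$. Because $f$ is an isometric \emph{bijection}, it carries $S_1$ onto the analogous set $S_1'=\{y\in Y:\vnorm{y-f(a)}=\vnorm{y-f(b)}=r\}$ (the inclusion $f(S_1)\subseteq S_1'$ uses only that $f$ is distance preserving, the reverse inclusion uses surjectivity), and then inductively $f(S_n)=S_n'$, where $S_n'$ is built in $Y$ from $f(a),f(b)$ exactly as $S_n$ was built in $X$; the inductive step uses that $f$ restricts to an isometric bijection $S_n\to S_n'$, so the diameters agree. Since $Y$ is again a normed real vector space, the point reflection $y\mapsto f(a)+f(b)-y$ is an isometry of $Y$, so the second paragraph applies in $Y$ and gives $\bigcap_n S_n'=\{\tfrac12(f(a)+f(b))\}$. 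Hence $f(m)=f\bigl(\bigcap_n S_n\bigr)\subseteq\bigcap_n f(S_n)=\bigcap_n S_n'$, and comparing these singletons yields $f\bigl(\tfrac12(a+b)\bigr)=\tfrac12\bigl(f(a)+f(b)\bigr)$, completing the proof.

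The step I expect to be the real obstacle is the metric characterisation of the midpoint in the second paragraph — in particular, verifying simultaneously that $m$ lies in every $S_n$ and that $\operatorname{diam}S_n\to0$. The use of the point reflection $\sigma$, which is available precisely because $X$ is a vector space equipped with a norm and not merely a metric space, is the decisive device: without some such symmetry the midpoint cannot be pinned down, since in a general normed space the equidistant set $S_1$ may be large. By comparison, the reduction in the first paragraph and the transfer in the third are routine, the latter invoking bijectivity of $f$ only in order to obtain equality, rather than mere inclusion, of the sets $S_n$ and $S_n'$.
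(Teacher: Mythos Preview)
Your argument is correct. It is essentially V\"{a}is\"{a}l\"{a}'s proof of the Mazur--Ulam theorem: the nested sets $S_n$ together with the point reflection $\sigma(x)=a+b-x$ pin down the midpoint metrically, and bijectivity of $f$ transports the whole construction to $Y$. Each step checks out, including the inductive verification that $m\in S_n$, the halving of diameters, and the equality $f(S_n)=S_n'$ (where surjectivity is indeed needed for the reverse inclusion).

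Note, however, that the paper does not supply its own proof of this theorem. Theorem~\ref{t1.5} is the classical Mazur--Ulam result, stated in the introduction as background, and the paper simply refers the reader to \cite{Nica2012EM} and \cite{Vaei2003AMM} for short proofs. So there is no ``paper's proof'' to compare against here; your write-up is in fact a fleshed-out version of the argument in \cite{Vaei2003AMM}, which the paper is content to cite. The paper's own contributions begin with Theorem~\ref{t1.9}, which repackages Mazur--Ulam in a form convenient for the seminormed setting, and then Proposition~\ref{p3.2} and Theorems~\ref{t3.2} and~\ref{t3.5}.
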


Short elegant proofs of this theorem can be found in \cite{Nica2012EM} and \cite{Vaei2003AMM}.

The main goal of the present paper is to generalize the Mazur--Ulam theorem to the case of seminormed real vector spaces. By term ``vector spaces'' we understand simultaneously the real vector spaces and the complex ones. If necessary we will indicate which of fields of scalars is used.

It should be noted here that, for the case of normed complex vector spaces, isometric bijections, generally speaking, are not affine, which is easy to see in the example of the bijection
\[
\mathbb{C} \ni z \mapsto \overline{z} \in \mathbb{C},
\]
where \(\mathbb{C}\) is the field of the complex numbers endowed by Euclidean norm and \(\overline{z}\) is the complex conjugate of \(z\).

Let us recall some basic definitions connected with seminormed vector spaces.

\begin{definition}\label{d2.1}
A \emph{seminorm} on a vector space \(X\) is a function \(\vnorm{\cdot} \colon X \to [0, \infty)\) such that \(\vnorm{x+y} \leqslant \vnorm{x} + \vnorm{y}\) and \(\vnorm{\alpha x} = |\alpha| \vnorm{x}\) for all \(x\), \(y \in X\) and every scalar \(\alpha\). A seminorm \(\vnorm{\cdot}\) is a \emph{norm} if \(\vnorm{x} = 0\) holds if and only if \(x=0_{X}\).
\end{definition}

In 1934 \DJ{}. Kurepa \cite{Kur1934CRASP} introduced the pseudometric spaces which, unlike metric spaces, allow the zero distance between different points. Let \(X\) be a set and let \(d \colon X \times X \to [0, \infty)\) be a symmetric function such that \(d(x, x) = 0\) for every \(x \in X\). The function \(d\) is a \emph{pseudometric} on \(X\) if it satisfies the triangle inequality
\[
d(x, y) \leqslant d(x, z) + d(z, y)
\]
for all \(x\), \(y\), \(z \in X\).

\begin{example}
If \((X, \vnorm{\cdot})\) is a seminormed vector space then the function
\begin{equation*}
\rho \colon X \times X \to [0, \infty), \quad \rho(x, y) = \vnorm{x - y},
\end{equation*}
is a pseudometric on \(X\).
\end{example}

The normed vector spaces can be characterized as seminormed vector spaces for which the corresponding pseudometrics are metrics.

\begin{definition}\label{d2.5}
Let \((X, d)\) and \((Y, \rho)\) be pseudometric spaces. A mapping \(\Phi \colon X \to Y\) is a \emph{pseudoisometric embedding} of \((X, d)\) in \((Y, \rho)\) if \(\rho(\Phi(x), \Phi(y)) = d(x, y)\) holds for all \(x\), \(y \in X\). If \(\Phi \colon X \to Y\) is a pseudoisometric embedding and, for every \(u \in Y\), there is \(v \in X\) such that \(\rho(\Phi(v), u) = 0\), then we say that \(\Phi\) is a \emph{pseudo-isometry}. The bijective pseudo-isometry \(\Phi \colon X \to Y\) is said to be an \emph{isometry} of \((X, d)\) and \((Y, \rho)\). Two pseudometric spaces are \emph{pseudoisometric} (\emph{isometric}) if there is a pseudo-isometry (isometry) of these spaces.
\end{definition}

For the case when pseudometric spaces \((X, d)\) and \((Y, \rho)\) are metric, we will say that a pseudoisometric embedding \(X \to Y\) is an isometric embedding of \((X, d)\) in \((Y, \rho)\).

The next lemma follows directly form Definition~\ref{d2.5}.

\begin{lemma}\label{l3.4}
Let \(U\), \(V\) and \(W\) be seminormed vector spaces. Then the following statements hold:
\begin{enumerate}
\item\label{l3.4:s1} A mapping \(U \xrightarrow{F} V \xrightarrow{\Phi} W\) is a pseudo-isometry whenever \(U \xrightarrow{F} V\) and \(V \xrightarrow{\Phi} W\) are pseudoisometries.
\item\label{l3.4:s2} If \(W\) is a normed vector space, then every pseudo-isometry \(V \xrightarrow{\Phi} W\) is surjective.
\end{enumerate}
\end{lemma}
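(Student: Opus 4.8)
The plan is to verify both statements by directly unwinding Definition~\ref{d2.5}; no serious obstacle is expected, and the only place where a little care is needed is the passage through the triangle inequality in statement~\ref{l3.4:s1}, where the vanishing of a distance in \(V\) has to be transported into \(W\) via the distance-preserving property of \(\Phi\).

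For \ref{l3.4:s1}, set \(G = \Phi \circ F \colon U \to W\). First I would check that \(G\) is a pseudoisometric embedding: for arbitrary \(x\), \(y \in U\) one has
\[
\vnorm{G(x) - G(y)} = \vnorm{\Phi(F(x)) - \Phi(F(y))} = \vnorm{F(x) - F(y)} = \vnorm{x - y},
\]
using successively that \(\Phi\) and \(F\) are pseudoisometric embeddings. Next I would verify the remaining condition from Definition~\ref{d2.5}. Given \(u \in W\), since \(\Phi\) is a pseudo-isometry there is \(w \in V\) with \(\vnorm{\Phi(w) - u} = 0\), and since \(F\) is a pseudo-isometry there is \(v \in U\) with \(\vnorm{F(v) - w} = 0\). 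Because \(\Phi\) preserves distances, \(\vnorm{\Phi(F(v)) - \Phi(w)} = \vnorm{F(v) - w} = 0\), whence by the triangle inequality
\[
\vnorm{G(v) - u} \leqslant \vnorm{\Phi(F(v)) - \Phi(w)} + \vnorm{\Phi(w) - u} = 0 .
\]
Thus \(G\) is a pseudo-isometry.

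For \ref{l3.4:s2}, let \(\Phi \colon V \to W\) be a pseudo-isometry with \(W\) normed, and fix \(u \in W\). By Definition~\ref{d2.5} there is \(v \in V\) with \(\vnorm{\Phi(v) - u} = 0\); since \(\vnorm{\cdot}\) is a norm on \(W\), this forces \(\Phi(v) - u = 0_{W}\), i.e.\ \(\Phi(v) = u\). As \(u \in W\) was arbitrary, \(\Phi\) is surjective. As indicated, the whole argument is a routine check against the definitions, the one slightly delicate step being the use of the triangle inequality above together with the isometry of \(\Phi\) to combine the two ``zero-distance'' witnesses.
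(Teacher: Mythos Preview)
Your proof is correct and is exactly the direct verification from Definition~\ref{d2.5} that the paper has in mind; the paper itself gives no argument beyond the remark that the lemma ``follows directly from Definition~\ref{d2.5}'', and your write-up supplies precisely those routine details.
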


If \((X, \vnorm{\cdot}_X)\) and \((Y, \vnorm{\cdot}_Y)\) are linear seminormed spaces and \(T \colon X \to Y\) is a pseudo-isometry of the pseudometric spaces \((X, d_X)\), \((Y, d_Y)\) endowed with pseudometrics \(d_X\) and \(d_Y\) generated by \(\vnorm{\cdot}_X\) and \(\vnorm{\cdot}_Y\), respectively, then we say that \(T\) is a pseudo-isometry of \((X, \vnorm{\cdot}_X)\) and \((Y, \vnorm{\cdot}_Y)\).

\begin{example}
Let \((X, \vnorm{\cdot}_X)\) and \((Y, \vnorm{\cdot}_Y)\) be seminormed vector spaces endowed by \emph{zero seminorms},
\[
\vnorm{x}_X = \vnorm{y}_Y = 0
\]
for all \(x \in X\) and \(y \in Y\), and let \(T \colon X \to Y\) be an arbitrary mapping. Then the following statements hold:
\begin{enumerate}\renewcommand{\theenumi}{\ensuremath{(\arabic{enumi})}}
\item \(T\) is a pseudoisometric embedding.
\item \(T\) is a pseudo-isometry.
\item \(T\) is an isometry iff \(T\) is bijective.
\item \(T\) is a linear pseudo-isometry iff \(T\) is a linear.
\end{enumerate}
\end{example}

\begin{remark}\label{r1.12}
The concept of isometry of metric spaces can be extended to pseudometric spaces in various non-equivalent ways. J.~Kelley \cite{Kelley1975} defined the isometries of pseudometric spaces \(X\) and \(Y\) as the distance-preserving surjections \(X \to Y\). It is clear that every isometry in Kelley's sense is a pseudo-isometry in the sense of Definition~\ref{d2.5} and, moreover, every isometry in the sense of Definition~\ref{d2.5} is also Kelley's isometry. Another generalization of isometries are the combinatorial similarities of pseudometric spaces (see \cite{DLAMH2020, DovBBMSSS2020, BD2022BKMSip, BD2023arXiv}). 
\end{remark}

The paper is organized as follows. 

Section~\ref{sec2} contains auxiliary results connected with pseudoisometries of seminormed vector spaces and their decompositions in a direct sum of some special subspaces.

The main new result of the paper is Theorem~\ref{t3.2} which gives us the necessary and sufficient conditions under which a pseudometric embedding \(X \to Y\) of seminormed real vector spaces \(X\) and \(Y\) is linear. In Proposition~\ref{p3.2} and Theorem~\ref{t3.5} we consider the cases when one from the spaces \(X\) and \(Y\) is a normed real vector space. 

In the final Section~\ref{sec4} we formulate an open problem connected with linearity of pseudoisometric embeddings.

\section{Auxiliary results}
\label{sec2}

Let \(Y\) and \(Z\) be linear subspaces of a vector space \(X\) and let
\begin{equation}\label{s2:e2}
Y \cap Z = \{0_X\}.
\end{equation}
Then the set 
\begin{equation}\label{s3:e3}
Y \oplus Z := \{y + z \colon y \in Y \text{ and } z \in Z\}
\end{equation}
will be called the \emph{direct sum} of \(Y\) and \(Z\). It is clear that \(Y \oplus Z\) is a linear subspace of \(X\).

\begin{theorem}\label{t2.2}
If \(X\) is a vector space and \(Z\) is a linear subspace of \(X\), then there exists a linear subspace \(Y\) of \(X\) such that \(X = Y \oplus Z\).
\end{theorem}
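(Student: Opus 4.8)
The plan is to invoke Zorn's lemma on the family
\[
\mathcal{F} = \{ Y \subseteq X : Y \text{ is a linear subspace of } X \text{ and } Y \cap Z = \{0_X\} \},
\]
partially ordered by inclusion. First I would observe that $\mathcal{F} \neq \varnothing$, since $\{0_X\} \in \mathcal{F}$. Then, given a nonempty chain $\mathcal{C} \subseteq \mathcal{F}$, I would set $Y_0 = \bigcup_{Y \in \mathcal{C}} Y$ and check that $Y_0 \in \mathcal{F}$: closure of $Y_0$ under addition and scalar multiplication holds because any finitely many elements of $Y_0$ already lie in a common member of the chain (here we use that $\mathcal{C}$ is totally ordered), and $Y_0 \cap Z = \{0_X\}$ because any element of $Y_0 \cap Z$ lies in $Y \cap Z$ for some $Y \in \mathcal{C}$. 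Thus $Y_0$ is an upper bound for $\mathcal{C}$ in $\mathcal{F}$, and Zorn's lemma furnishes a maximal element $Y$ of $\mathcal{F}$.

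It then remains to prove $X = Y \oplus Z$. By construction $Y \cap Z = \{0_X\}$, so the direct sum $Y \oplus Z$ is defined in the sense of \eqref{s2:e2}--\eqref{s3:e3}, and the only thing left is the identity $Y + Z = X$. Suppose, for contradiction, that some $x \in X$ does not belong to $Y + Z$. Consider the larger linear subspace $Y' = Y + \{\alpha x : \alpha \text{ is a scalar}\}$. I claim $Y' \cap Z = \{0_X\}$: if $y + \alpha x \in Z$ for some $y \in Y$ and scalar $\alpha$, then $\alpha = 0$, for otherwise $x = \alpha^{-1}\bigl((y + \alpha x) - y\bigr) \in Z + Y$, contradicting the choice of $x$; hence $y = y + \alpha x \in Y \cap Z = \{0_X\}$. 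Therefore $Y' \in \mathcal{F}$, while $x \in Y' \setminus Y$ gives $Y \subsetneq Y'$, contradicting the maximality of $Y$. Consequently $Y + Z = X$, so $X = Y \oplus Z$.

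The argument is essentially routine once Zorn's lemma is in play; the only points requiring a little care are the verification that a chain-union of members of $\mathcal{F}$ again lies in $\mathcal{F}$ and the one-dimensional extension step that turns "$Y + Z \neq X$" into a strictly larger member of $\mathcal{F}$. I do not expect a genuine obstacle, since this is the standard existence of an algebraic complement; I only note that some form of the axiom of choice (equivalently, the existence of Hamel bases) is unavoidable in full generality. An alternative, equally valid route would be to fix a Hamel basis $B_Z$ of $Z$, extend it to a Hamel basis $B$ of $X$, and take $Y = \operatorname{span}(B \setminus B_Z)$; this again rests on Zorn's lemma.
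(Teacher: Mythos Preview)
Your argument is correct: the Zorn's-lemma route you outline is the standard proof that every subspace admits an algebraic complement, and the two delicate steps you flag (chain-union stability of $\mathcal{F}$, and the one-dimensional enlargement contradicting maximality) are handled properly. The alternative Hamel-basis argument you mention is equally valid.

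The paper itself does not prove this theorem at all; it simply cites Theorem~03 in \cite[p.~21]{PR1968}. So there is no ``paper's own proof'' to compare against---you have supplied a complete, self-contained argument where the authors chose to defer to the literature. Your remark that some form of the axiom of choice is unavoidable is also apt and worth keeping.
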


For the proof see, for example, Theorem~03 in \cite[p.~21]{PR1968}.

\begin{example}\label{ex2.2}
Using the triangle inequality and the homogeneity of the seminorms, it is easy to prove that, for every seminormed vector space \((X, \vnorm{\cdot})\), the set
\begin{equation}\label{ex2.2:e1}
Z_X := \{z \in X \colon \vnorm{z} = 0\}
\end{equation}
is a linear subspace of \(X\) (see Theorem~1.34 in \cite[p.~26]{Rud1991}). Consequently, by Theorem~\ref{t2.2}, there is a linear subspace \(Y\) of \(X\) such that \(X = Y \oplus Z_X\).
\end{example}

The following lemma is also closely related to Theorem~1.34 of \cite{Rud1991}.

\begin{lemma}\label{l3.11}
Let \((X, \vnorm{\cdot}_X)\) be a seminormed vector space and let \(Y\) be a linear subspace of \(X\) which satisfies the equality
\begin{equation}\label{l3.11:e1}
X = Y \oplus Z_X,
\end{equation}
where \(Z_X\) is defined by \eqref{ex2.2:e1}. Write \(\vnorm{\cdot}_Y\) for the restriction of the seminorm \(\vnorm{\cdot}_X\) on the set \(Y\). Then \((Y, \vnorm{\cdot}_Y)\) is normed and linearly pseudoisometric to \((X, \vnorm{\cdot}_X)\).
\end{lemma}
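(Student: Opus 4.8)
The plan is to verify the two assertions in turn: the first by a one-line argument, and the second by showing that the inclusion map itself serves as the required linear pseudo-isometry.

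First I would check that \(\vnorm{\cdot}_Y\) is a norm. Suppose \(y \in Y\) satisfies \(\vnorm{y}_Y = 0\). Since \(\vnorm{\cdot}_Y\) is the restriction of \(\vnorm{\cdot}_X\) to \(Y\), this means \(\vnorm{y}_X = 0\), i.e. \(y \in Z_X\). Then \(y \in Y \cap Z_X\), and the direct sum decomposition \eqref{l3.11:e1} (together with \eqref{s2:e2}) forces \(Y \cap Z_X = \{0_X\}\), whence \(y = 0_X\). Thus \((Y, \vnorm{\cdot}_Y)\) is normed.

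Next I would introduce the inclusion map \(\iota \colon Y \to X\), \(\iota(y) = y\), which is visibly linear. It is a pseudoisometric embedding of \((Y, \vnorm{\cdot}_Y)\) in \((X, \vnorm{\cdot}_X)\): for all \(y_1, y_2 \in Y\) we have \(y_1 - y_2 \in Y\), so
\[
\vnorm{\iota(y_1) - \iota(y_2)}_X = \vnorm{y_1 - y_2}_X = \vnorm{y_1 - y_2}_Y.
\]
It remains to verify the ``approximate surjectivity'' required in Definition~\ref{d2.5}: given \(u \in X\), use \eqref{l3.11:e1} to write \(u = y + z\) with \(y \in Y\) and \(z \in Z_X\); then \(\iota(y) - u = -z\), so \(\vnorm{\iota(y) - u}_X = \vnorm{z}_X = 0\). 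Hence \(\iota\) is a linear pseudo-isometry of \((Y, \vnorm{\cdot}_Y)\) and \((X, \vnorm{\cdot}_X)\), which is exactly what ``linearly pseudoisometric'' asserts.

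There is no genuine obstacle here; the computation is routine once the decomposition \(u = y + z\) is in hand. The only point requiring a little care is bookkeeping: one must make sure that the restriction-of-the-seminorm convention is used consistently (so that distances computed in \(Y\) and in \(X\) literally agree on \(Y\)), and that the unwinding of \(u = y + z\) is matched precisely against the wording of the pseudo-isometry condition in Definition~\ref{d2.5}.
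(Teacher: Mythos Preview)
Your proof is correct, but it takes a different route from the paper. The paper constructs the linear pseudo-isometry in the opposite direction: it uses the projection \(\Phi \colon X \to Y\) sending \(x = y + z\) to \(y\), and then must verify the distance-preservation identity \(\vnorm{\Phi(x_1) - \Phi(x_2)}_Y = \vnorm{x_1 - x_2}_X\) by first proving the auxiliary fact \(\vnorm{x+z}_X = \vnorm{x}_X\) for all \(x \in X\), \(z \in Z_X\) via the triangle inequality. Your inclusion map \(\iota \colon Y \to X\) avoids this step entirely, since distance-preservation is automatic for a restriction; the only nontrivial check is the approximate surjectivity, which the decomposition \(u = y + z\) handles immediately. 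So your argument is a bit more economical. On the other hand, the paper's projection is genuinely surjective onto \(Y\) (indeed \(\Phi(y) = y\) for \(y \in Y\)), hence an isometry in Kelley's sense as well, and the projection map is the one actually used later in the proof of Theorem~\ref{t3.2}.
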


\begin{proof}
Equality \eqref{l3.11:e1} implies that, for every \(x \in X\), there exist unique \(y = y(x) \in Y\) and \(z = z(x) \in Z_X\) for which
\begin{equation}\label{l3.11:e2}
x = y + z
\end{equation}
holds. Consequently, there is a unique \(\Phi \colon X \to Y\) such that, for every \(x \in X\), we have
\begin{equation}\label{l3.11:e3}
x = \Phi(x) + z,
\end{equation}
where \(z = z(x)\) is the same as in \eqref{l3.11:e2}. Using \eqref{s2:e2} and \eqref{s3:e3}, we see that \(\Phi\) is a linear map. 

Let \(z \in Z_X\) and let \(x \in X\). Then we have 
\[
\|x+z\| \leqslant \|x\| + \|z\| = \|x\| = \|x + z - z\| \leqslant \|x+z\| + \|z\| = \|x + z\|
\]
and, hence, \(\|x + z\| = \|x\|\) for all \(x \in X\) and \(z \in Z_X\). Now it follows from~\eqref{l3.11:e3} and \eqref{ex2.2:e1} that
\[
\vnorm{\Phi(x_1) - \Phi(x_2)}_Y = \vnorm{x_1 - x_2}_X
\]
holds for all \(x_1\), \(x_2 \in X\). Moreover, the uniqueness of \(y = y(x)\) and \(z = z(x)\) in \eqref{l3.11:e2} implies the equality \(\Phi(y) = y\) for each \(y \in Y\). Thus, \(\Phi \colon X \to Y\) is a linear pseudo-isometry of \((X, \vnorm{\cdot}_X)\) and \((Y, \vnorm{\cdot}_Y)\). To complete the proof it suffices to show that the seminorm \(\vnorm{\cdot}_Y\) is a norm.

The space \(Y\) is a normed subspace of \(X\) if and only if 
\begin{equation}\label{p2.3:e2}
Z_{Y} = \{0_X\}.
\end{equation}
Since \(Y\) is a linear subspace of \(X\), we have the inclusion \(Z_{Y} \subseteq Z_X\). Now~\eqref{p2.3:e2} follows from \eqref{l3.11:e1} and the definition of \(Y \oplus Z_X\). Hence, \(Y\) is a normed subspace of \(X\).
\end{proof}

\section{Mazur--Ulam theorem for seminormed vector spaces}
\label{sec3}

Let us start from a suitable for us reformulation of the Mazur--Ulam theorem.

\begin{theorem}\label{t1.9}
Let \(\Phi \colon X \to Y\) be an isometric embedding of normed real vector spaces \(X\) and \(Y\). Then the following statements are equivalent:
\begin{enumerate}
\item \label{t1.9:s1} \(\Phi\) is linear.
\item \label{t1.9:s2} The set \(\Phi(X)\) is a linear subspace of \(Y\) and the equality \(\Phi(0_X) = 0_Y\) holds.
\end{enumerate}
\end{theorem}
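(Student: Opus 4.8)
The plan is to dispose of the implication \eqref{t1.9:s1}$\Rightarrow$\eqref{t1.9:s2} by a one-line observation and to reduce \eqref{t1.9:s2}$\Rightarrow$\eqref{t1.9:s1} to the classical Mazur--Ulam theorem (Theorem~\ref{t1.5}). For \eqref{t1.9:s1}$\Rightarrow$\eqref{t1.9:s2}: if \(\Phi\) is linear, then \(\Phi(0_X) = 0_Y\) holds automatically, and the image of a linear map is a linear subspace of the codomain, so \(\Phi(X)\) is a linear subspace of \(Y\).

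For \eqref{t1.9:s2}$\Rightarrow$\eqref{t1.9:s1} I would first note that, since \(X\) and \(Y\) are normed, the pseudometrics generated by their norms are metrics; hence the defining equality \(\vnorm{\Phi(x_1) - \Phi(x_2)}_Y = \vnorm{x_1 - x_2}_X\) of an isometric embedding forces \(\Phi\) to be injective. Consequently, regarded as a map \(\Phi \colon X \to \Phi(X)\), it is a bijection. By hypothesis \eqref{t1.9:s2}, \(\Phi(X)\) is a linear subspace of \(Y\), and equipped with the restriction of \(\vnorm{\cdot}_Y\) it is a normed real vector space. Thus \(\Phi \colon X \to \Phi(X)\) is an isometric bijection between normed real vector spaces, and Theorem~\ref{t1.5} applies: \(\Phi\) is affine, i.e. \(\Phi\bigl(tx_1 + (1-t)x_2\bigr) = t\Phi(x_1) + (1-t)\Phi(x_2)\) for all \(x_1\), \(x_2 \in X\) and all real \(t\).

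It remains to pass from affinity to linearity. Putting \(x_2 = 0_X\) and using \(\Phi(0_X) = 0_Y\) gives \(\Phi(tx_1) = t\Phi(x_1)\) for every real \(t\), that is, homogeneity; specializing affinity to \(t = \tfrac12\) and then invoking homogeneity yields additivity \(\Phi(x_1 + x_2) = \Phi(x_1) + \Phi(x_2)\). Therefore \(\Phi\) is linear, which is \eqref{t1.9:s1}.

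The only genuine subtlety — the step that should not be glossed over — is the verification that \(\Phi(X)\), endowed with the norm inherited from \(Y\), is really a normed real vector space to which Theorem~\ref{t1.5} is applicable; this is exactly where hypothesis \eqref{t1.9:s2} (that \(\Phi(X)\) is a \emph{linear} subspace) is used, together with the elementary fact that a linear subspace of a normed space is itself normed. Everything after the appeal to Mazur--Ulam — the reduction of ``affine with \(\Phi(0_X) = 0_Y\)'' to ``linear'' — is routine.
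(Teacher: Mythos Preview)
Your proof is correct and follows essentially the same route as the paper's: both directions are handled identically, with the nontrivial implication reduced to the classical Mazur--Ulam theorem applied to the bijection \(\Phi \colon X \to \Phi(X)\), followed by the observation that an affine map fixing the origin is linear. You spell out more explicitly than the paper does the injectivity of \(\Phi\), the fact that \(\Phi(X)\) is a normed space, and the passage from affinity to linearity, but these are exactly the details the paper suppresses as routine.
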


\begin{proof}
The implication \(\ref{t1.9:s1} \Rightarrow \ref{t1.9:s2}\) is evidently valid. Let \ref{t1.9:s2} hold. Then \(\Phi\) is affine by the Mazur--Ulam theorem. Now it suffices to note that the affine mapping \(\Phi\) is linear iff \(\Phi(0_X) = 0_Y\) holds.
\end{proof}

The following example shows that, for a seminormed real vector space \(Y\), a pseudoisometric embedding \(\Phi \colon X \to Y\) satisfying the equality \(\Phi(0_X) = 0_Y\) can be non-linear even if \(X\) is a normed real space.

\begin{example}\label{ex3.1}
Let \((\mathbb{R}^2, \vnorm{\cdot})\) be a two-dimensional seminormed real vector space endowed with the seminorm \(\vnorm{\cdot}\) such that
\[
\vnorm{(x, y)} := |x|
\]
for each \((x, y) \in \mathbb{R}^2\). Let us consider the subspace \(X\) of \(\mathbb{R}^2\) defined by
\[
X := \{(x, y) \in \mathbb{R}^2 \colon y = 0\}.
\]
Then \(X\) is a normed one-dimensional linear subspace of \((\mathbb{R}^2, \vnorm{\cdot})\) and \(\mathbb{R}^2 = X \oplus Z_{\mathbb{R}^2}\) holds. Let \(f \colon \mathbb{R} \to \mathbb{R}\) be an arbitrary function such that \(f(0) = 0\). Then the mapping \(F \colon X \to \mathbb{R}^2\),
\[
F(x, y) = (x, 0) + (0, f(x))
\]
is a pseudo-isometry of \(X\) and \(\mathbb{R}^2\), and the equality \(F(0_{\mathbb{R}^2}) = 0_{\mathbb{R}^2}\) holds.
\end{example}

The following proposition shows that Theorem~\ref{t1.9} remains true if the image space \(Y\) is only seminormed vector space. This is possible because the pseudometric embedding transfers the norm property of \(X\) onto \(Y\).

\begin{proposition}\label{p3.2}
Let \((X, \vnorm{\cdot}_X)\) and \((Y, \vnorm{\cdot}_Y)\) be seminormed real vector spaces and let \(\Phi \colon X \to Y\) be a pseudoisometric embedding. If \(X\) is normed, then the following statements are equivalent:
\begin{enumerate}
\item \label{p3.2:s1} \(\Phi\) is a linear mapping.
\item \label{p3.2:s2} \(\Phi(X)\) is a linear subspace of \(Y\) and the equality 
\begin{equation}\label{p3.2:e1}
\Phi(0_X) = 0_Y
\end{equation}
holds.
\end{enumerate}
\end{proposition}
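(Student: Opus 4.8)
The plan is to reduce Proposition~\ref{p3.2} to Theorem~\ref{t1.9} by factoring the seminormed target $Y$ through a genuine normed space. The implication $\ref{p3.2:s1} \Rightarrow \ref{p3.2:s2}$ is immediate, so the work is in $\ref{p3.2:s2} \Rightarrow \ref{p3.2:s1}$. By Example~\ref{ex2.2} there is a linear subspace $W$ of $Y$ with $Y = W \oplus Z_Y$, and by Lemma~\ref{l3.11} the restriction $\vnorm{\cdot}_W$ makes $(W, \vnorm{\cdot}_W)$ a normed space, while the associated projection $\Psi \colon Y \to W$ (the unique linear map with $u = \Psi(u) + z(u)$, $z(u) \in Z_Y$) is a linear pseudo-isometry of $(Y, \vnorm{\cdot}_Y)$ and $(W, \vnorm{\cdot}_W)$.

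First I would form the composition $\Psi \circ \Phi \colon X \to W$. By Lemma~\ref{l3.4}\ref{l3.4:s1} it is a pseudo-isometry, and since $W$ is normed it is in fact an isometric embedding of $(X, \vnorm{\cdot}_X)$ into $(W, \vnorm{\cdot}_W)$. Next I would check that $\Psi \circ \Phi$ satisfies hypothesis~\ref{t1.9:s2} of Theorem~\ref{t1.9}: the image $(\Psi \circ \Phi)(X) = \Psi(\Phi(X))$ is a linear subspace of $W$ because $\Phi(X)$ is a linear subspace of $Y$ and $\Psi$ is linear; and $(\Psi \circ \Phi)(0_X) = \Psi(\Phi(0_X)) = \Psi(0_Y) = 0_W$ using \eqref{p3.2:e1} and linearity of $\Psi$. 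Theorem~\ref{t1.9} then gives that $\Psi \circ \Phi$ is linear.

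It remains to lift linearity of $\Psi \circ \Phi$ back to $\Phi$ itself, and this is the step I expect to be the main obstacle, since $\Psi$ is not injective — it collapses $Z_Y$. The key observation is that $\Phi(X)$ is a \emph{normed} subspace of $Y$: indeed $Z_{\Phi(X)} \subseteq Z_Y$, and if $v \in Z_{\Phi(X)}$ then $v = \Phi(x)$ for some $x$ with $\vnorm{\Phi(x)}_Y = 0$, so $\vnorm{x}_X = \vnorm{\Phi(x) - \Phi(0_X)}_Y = \vnorm{\Phi(x)}_Y = 0$ (using \eqref{p3.2:e1}), hence $x = 0_X$ because $X$ is normed, so $v = 0_Y$; thus $Z_{\Phi(X)} = \{0_Y\}$. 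Since $\Phi(X) \cap Z_Y \subseteq Z_{\Phi(X)} = \{0_Y\}$, the projection $\Psi$ restricted to $\Phi(X)$ is injective, so it is a linear isomorphism of $\Phi(X)$ onto $(\Psi \circ \Phi)(X)$ whose inverse is linear. Therefore $\Phi = \left(\Psi|_{\Phi(X)}\right)^{-1} \circ (\Psi \circ \Phi)$ is a composition of linear maps, hence linear, which completes the proof.
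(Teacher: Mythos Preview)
Your proof is correct, but it takes a longer detour than the paper's argument. You introduce the decomposition $Y = W \oplus Z_Y$ and the projection $\Psi$, apply Theorem~\ref{t1.9} to $\Psi \circ \Phi$, and then lift back by showing $\Psi|_{\Phi(X)}$ is injective. The paper instead observes directly that the restriction of $\vnorm{\cdot}_Y$ to the linear subspace $\Phi(X)$ is a norm (using exactly your computation that $Z_{\Phi(X)} = \{0_Y\}$), and then applies Theorem~\ref{t1.9} to $\Phi$ viewed as an isometric embedding of the normed space $X$ into the normed space $\Phi(X)$. In other words, the very observation you use to make $\Psi|_{\Phi(X)}$ injective already suffices to eliminate the projection $\Psi$ entirely: once $\Phi(X)$ is itself normed, there is no need to pass to $W$ at all. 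Your factorization through $W$ is valid but buys nothing here.

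One small citation slip: you invoke Lemma~\ref{l3.4}\ref{l3.4:s1} to conclude that $\Psi \circ \Phi$ is a pseudo-isometry, but that lemma requires both factors to be pseudo-isometries, whereas $\Phi$ is only assumed to be a pseudoisometric embedding. What you actually need (and what you use) is simply that the composition of distance-preserving maps is distance-preserving, which is immediate from Definition~\ref{d2.5}; so the argument stands, just with the wrong label.
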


\begin{proof}
\(\ref{p3.2:s1} \Rightarrow \ref{p3.2:s2}\). If \(\Phi\) is linear, then \ref{p3.2:s2} follows from the definition of linear mappings.

\(\ref{p3.2:s2} \Rightarrow \ref{p3.2:s1}\). Let \ref{p3.2:s2} hold and let the seminorm \(\vnorm{\cdot}_X\) be a norm. Let us consider two different points \(x_1\), \(x_2 \in X\). Since \((X, \vnorm{\cdot}_X)\) is normed and \(\Phi\) is a pseudoisometric embedding, we have
\[
0 < \vnorm{x_1 - x_2}_X = \vnorm{\Phi(x_1)-\Phi(x_2)}_Y.
\]
Consequently, \(\Phi\) is injective. It implies \(\vnorm{y}_Y > 0\) for every 
\[
y \in Y \setminus \{0_Y\}.
\]
Indeed, if \(y \in \Phi(X)\) and \(y \neq 0_Y\), then we can find \(x \in X \setminus \{0_X\}\) such that \(y = \Phi(x)\). Now using \eqref{p3.2:e1} we obtain
\[
0 < \vnorm{x}_X = \vnorm{x - 0_X}_X = \vnorm{\Phi(x) - \Phi(0_X)}_Y = \vnorm{y - 0_Y}_Y = \vnorm{y}_Y.
\]
Hence, the restriction of the seminorm \(\vnorm{\cdot}_Y\) on the vector space \(\Phi(X)\) is a norm. Thus, \(\Phi(X)\) is a normed vector subspace of \(Y\). Consequently, \(\Phi\) is linear by Theorem~\ref{t1.9}.
\end{proof}

Thus, if additionally \(Y\) is only a seminormed vector space, we just have to be able to guarantee the linearity on \(Z_X\).

\begin{theorem}\label{t3.2}
Let \((X, \vnorm{\cdot}_X)\) and \((Y, \vnorm{\cdot}_Y)\) be seminormed real vector spaces and let \(\Phi \colon X \to Y\) be a pseudoisometric embedding. Then the following statements are equivalent:
\begin{enumerate}
\item \label{t3.2:s1} \(\Phi\) is a linear mapping.
\item \label{t3.2:s2} The restriction \(\Phi|_{Z_X}\) is linear and there is a linear subspace \(W_X\) of \(X\) such that
\begin{equation}\label{t3.2:e1}
X = W_X \oplus Z_X,
\end{equation}
and \(\Phi(W_X)\) is a linear subspace of \(X\), and
\begin{equation}\label{t3.2:e2}
\Phi(w+z) = \Phi(w) + \Phi(z)
\end{equation}
for all \(w \in W_X\) and \(z \in Z_X\).
\end{enumerate}
\end{theorem}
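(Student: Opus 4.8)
The plan is to establish the easy implication \ref{t3.2:s1} $\Rightarrow$ \ref{t3.2:s2} straight from the definitions, and to obtain \ref{t3.2:s2} $\Rightarrow$ \ref{t3.2:s1} by restricting $\Phi$ to the normed ``complement'' $W_X$ and invoking the already-proved Proposition~\ref{p3.2} there.

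For \ref{t3.2:s1} $\Rightarrow$ \ref{t3.2:s2}: assuming $\Phi$ is linear, the restriction $\Phi|_{Z_X}$ is linear; by Example~\ref{ex2.2} there is a linear subspace $W_X$ of $X$ with $X = W_X \oplus Z_X$; the set $\Phi(W_X)$ is a linear subspace of $Y$ because $\Phi$ is linear and $W_X$ is a subspace; and \eqref{t3.2:e2} is a special case of the additivity of $\Phi$. So \ref{t3.2:s2} holds with this choice of $W_X$.

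For the substantive direction \ref{t3.2:s2} $\Rightarrow$ \ref{t3.2:s1}, suppose \ref{t3.2:s2} holds. The restriction $\Phi|_{W_X} \colon W_X \to Y$ is again a pseudoisometric embedding, and by Lemma~\ref{l3.11} applied to the decomposition \eqref{t3.2:e1} the space $W_X$, equipped with the restriction of $\vnorm{\cdot}_X$, is in fact \emph{normed}. Since $0_X \in Z_X$ and $\Phi|_{Z_X}$ is linear, we get $\Phi(0_X) = 0_Y$, hence $\Phi|_{W_X}(0_{W_X}) = 0_Y$; together with the hypothesis that $\Phi(W_X)$ is a linear subspace of $Y$, this verifies all hypotheses of Proposition~\ref{p3.2} with $W_X$ in the role of $X$, and we conclude that $\Phi|_{W_X}$ is linear.

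It then remains to glue: by \eqref{s2:e2} and \eqref{s3:e3} every $x \in X$ has a unique decomposition $x = w + z$ with $w \in W_X$ and $z \in Z_X$. Given $x_i = w_i + z_i$ ($i = 1,2$) and a scalar $\alpha$, I would apply \eqref{t3.2:e2} to $x_1 + x_2 = (w_1+w_2)+(z_1+z_2)$ and to $\alpha x_1 = \alpha w_1 + \alpha z_1$, then use the linearity of $\Phi|_{W_X}$ and of $\Phi|_{Z_X}$, and finally \eqref{t3.2:e2} once more, to deduce $\Phi(x_1+x_2) = \Phi(x_1)+\Phi(x_2)$ and $\Phi(\alpha x_1) = \alpha\,\Phi(x_1)$; thus $\Phi$ is linear. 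I do not expect a genuine obstacle: the decomposition $X = W_X \oplus Z_X$ is essentially prescribed by the statement, and the argument is a bookkeeping reduction to Proposition~\ref{p3.2}. The only point needing care is checking that the hypotheses of Proposition~\ref{p3.2} survive the restriction to $W_X$ — namely that $W_X$ is normed (this is exactly Lemma~\ref{l3.11}) and that $\Phi(0_X) = 0_Y$ (which is free from the assumed linearity of $\Phi|_{Z_X}$).
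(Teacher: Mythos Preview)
Your proposal is correct and follows essentially the same approach as the paper: both directions are handled identically, with the key step \ref{t3.2:s2} $\Rightarrow$ \ref{t3.2:s1} reducing to Proposition~\ref{p3.2} on the complement $W_X$ after Lemma~\ref{l3.11} certifies that $W_X$ is normed and the linearity of $\Phi|_{Z_X}$ supplies $\Phi(0_X)=0_Y$. The only cosmetic difference is that the paper verifies linearity via $\Phi(\alpha_1 x_1+\alpha_2 x_2)=\alpha_1\Phi(x_1)+\alpha_2\Phi(x_2)$ in one stroke, whereas you check additivity and homogeneity separately.
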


\begin{proof}
\(\ref{t3.2:s1} \Rightarrow \ref{t3.2:s2}\). Let \(\Phi\) be a linear mapping. As was noted in Example~\ref{ex2.2}, the set \(Z_X\) is a linear subspace of \(X\). Consequently, \(\Phi|_{Z_X}\) is also linear as a restriction of a linear mapping. By Theorem~\ref{t2.2}, there is a linear subspace \(W_X\) of \(X\) such that \eqref{t3.2:e1} holds. Since \(\Phi\) is linear, \(\Phi(W_X)\) is a linear subspace of \(Y\). The linearity of \(\Phi\) also implies \eqref{t3.2:e2} for all \(w \in W_X\) and \(z \in Z_X\).

\(\ref{t3.2:s2} \Rightarrow \ref{t3.2:s1}\). Let \ref{t3.2:s2} hold. Let us denote by \(W_X\) a linear subspace of \(X\) satisfying \eqref{t3.2:e1} such that \(\Phi(W_X)\) is a linear subspace of \(Y\) and \eqref{t3.2:e2} holds for all \(w \in W_X\) and \(z \in Z_X\). 

To prove \ref{t3.2:s1} it suffices to show that 
\begin{equation}\label{t3.2:e5}
\Phi(\alpha_1 x_1 + \alpha_2 x_2) = \alpha_1 \Phi(x_1) + \alpha_2 \Phi(x_2)
\end{equation}
holds for all \(x_1\), \(x_2 \in X\) and all scalars \(\alpha_1\), \(\alpha_2\). To do so, using decomposition \eqref{t3.2:e1}, we can find \(z_i \in Z_X\) and \(w_i \in W_X\) such that 
\[
x_i = z_i + w_i, \quad i=1, 2.
\]
Consequently, we have 
\[
\alpha_1 x_1 + \alpha_2 x_2 = (\alpha_1 z_1 + \alpha_2 z_2) + (\alpha_1 w_1 + \alpha_2 w_2)
\]
with \(\alpha_1 z_1 + \alpha_2 z_2 \in Z_X\) and \(\alpha_1 w_1 + \alpha_2 w_2 \in W_X\). Statement~\ref{t3.2:s2} and Proposition~\ref{p3.2} imply that
\begin{equation}\label{t3.2:e6}
\begin{aligned}
\Phi(\alpha_1 x_1 + \alpha_2 x_2) & =\Phi((\alpha_1 z_1 + \alpha_2 z_2) + (\alpha_1 w_1 + \alpha_2 w_2)) \\
& = \Phi(\alpha_1 z_1 + \alpha_2 z_2) + \Phi(\alpha_1 w_1 + \alpha_2 w_2)\\
& = \Phi|_{Z_X}(\alpha_1 z_1 + \alpha_2 z_2) + \Phi|_{W_X}(\alpha_1 w_1 + \alpha_2 w_2).
\end{aligned}
\end{equation}
By Lemma~\ref{l3.11}, \(W_X\) must be a normed vector space and, together with the assumption \(\Phi(0_X) = 0_Y\), the linearity of \(\Phi\) on \(W_X\) follows from Proposition~\ref{p3.2}. Hence, equation~\eqref{t3.2:e5} is satisfied, thus, \(\Phi\) is linear on the whole domain \(X\).
\end{proof}

The final result of the paper, Theorem~\ref{t3.5}, is similar to Proposition~\ref{p3.2}. In Proposition~\ref{p3.2} \(X\) was normed and now \(Y\) is normed. Moreover, \(\Phi\) is now even a pseudo-isometry and was only a pseudometric embedding. This stronger assumption is needed in order for \(\Phi\) to transfer the normed vector space property of \(Y\) onto \(X\), so that again Theorem~\ref{t1.9} can be applied.

\begin{theorem}\label{t3.5}
Let \(X\) and \(Y\) be seminormed real vector spaces and let \(\Phi \colon X \to Y\) be a pseudo-isometry. If \(Y\) is normed, then the following statements are equivalent:
\begin{enumerate}
\item \label{t3.5:s1} \(\Phi\) is a linear mapping.
\item \label{t3.5:s2} The equality
\begin{equation}\label{t3.5:e1}
\Phi(0_X) = 0_Y
\end{equation}
holds.
\end{enumerate}
\end{theorem}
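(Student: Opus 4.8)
The implication \(\ref{t3.5:s1} \Rightarrow \ref{t3.5:s2}\) is trivial, since every linear mapping sends \(0_X\) to \(0_Y\). For the converse, the plan is to verify that condition~\ref{t3.2:s2} of Theorem~\ref{t3.2} holds automatically under the present hypotheses, and then to invoke Theorem~\ref{t3.2}.

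First I would note that, because \(Y\) is normed and \(\Phi\) is a pseudo-isometry (not merely a pseudoisometric embedding), \(\Phi\) is surjective by statement~\ref{l3.4:s2} of Lemma~\ref{l3.4}; thus \(\Phi(X) = Y\). This is the one place where the stronger assumption on \(\Phi\) is used, and it is exactly what separates Theorem~\ref{t3.5} from Proposition~\ref{p3.2}. Next, for every \(z \in Z_X\) the pseudoisometric identity together with \eqref{t3.5:e1} gives
\[
\vnorm{\Phi(z) - 0_Y}_Y = \vnorm{\Phi(z) - \Phi(0_X)}_Y = \vnorm{z - 0_X}_X = \vnorm{z}_X = 0,
\]
and since \(\vnorm{\cdot}_Y\) is a \emph{norm}, this forces \(\Phi(z) = 0_Y\). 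Hence \(\Phi|_{Z_X}\) is the zero mapping, in particular linear. Applying the same identity to the pair \(x+z\), \(x\) (for arbitrary \(x \in X\), \(z \in Z_X\)) gives \(\vnorm{\Phi(x+z) - \Phi(x)}_Y = \vnorm{z}_X = 0\), hence \(\Phi(x+z) = \Phi(x)\); combined with \(\Phi(z) = 0_Y\) this is precisely \eqref{t3.2:e2}.

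It remains to produce the complementary subspace. By Example~\ref{ex2.2} the set \(Z_X\) is a linear subspace of \(X\), so Theorem~\ref{t2.2} furnishes a linear subspace \(W_X\) of \(X\) with \(X = W_X \oplus Z_X\), i.e.\ \eqref{t3.2:e1} holds. Finally, using \(\Phi(w+z) = \Phi(w)\) for all \(w \in W_X\), \(z \in Z_X\), and the fact that every element of \(X = W_X \oplus Z_X\) admits such a representation, I obtain
\[
\Phi(W_X) = \{\Phi(w+z) : w \in W_X,\ z \in Z_X\} = \Phi(X) = Y,
\]
which is trivially a linear subspace of \(Y\). Thus all requirements of Theorem~\ref{t3.2}\,\ref{t3.2:s2} are met, and Theorem~\ref{t3.2} yields the linearity of \(\Phi\).

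I do not expect a genuine obstacle here: the argument is bookkeeping once one makes the single observation that the normedness of \(Y\) collapses \(Z_X\) under \(\Phi\), whereupon surjectivity upgrades the otherwise delicate clause ``\(\Phi(W_X)\) is a linear subspace of \(Y\)'' in Theorem~\ref{t3.2}\,\ref{t3.2:s2} to the triviality \(\Phi(W_X) = Y\). (Alternatively one could bypass Theorem~\ref{t3.2}: \(\Phi|_{W_X}\) is a pseudoisometric embedding of the normed space \(W_X\) with image \(Y\) and \(\Phi(0_X) = 0_Y\), hence linear by Proposition~\ref{p3.2}, and linearity of \(\Phi\) on all of \(X = W_X \oplus Z_X\) then follows because \(\Phi\) factors through the projection onto \(W_X\).)
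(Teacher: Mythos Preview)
Your proof is correct and follows essentially the same approach as the paper: verify the hypotheses of Theorem~\ref{t3.2}\,\ref{t3.2:s2} by exploiting that \(Y\) is normed (so \(\Phi\) collapses \(Z_X\) to \(0_Y\) and \(\Phi(x+z)=\Phi(x)\)) and that \(\Phi\) is a pseudo-isometry onto a normed space (so \(\Phi(W_X)=Y\)). The only cosmetic difference is that the paper obtains \(\Phi(W_X)=Y\) by composing the identical embedding \(W_X\hookrightarrow X\) with \(\Phi\) and applying both parts of Lemma~\ref{l3.4}, whereas you first get \(\Phi(X)=Y\) from Lemma~\ref{l3.4}\,\ref{l3.4:s2} and then use \(\Phi(w+z)=\Phi(w)\) to conclude \(\Phi(W_X)=\Phi(X)\); your route is arguably more direct.
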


\begin{proof}
Let \(Y\) be a normed vector space. 

The implication \(\ref{t3.5:s1} \Rightarrow \ref{t3.5:s2}\) is trivially valid.

Let us prove the validity of \(\ref{t3.5:s2} \Rightarrow \ref{t3.5:s1}\). Suppose that \eqref{t3.5:e1} holds. By Theorem~\ref{t3.2}, statement \ref{t3.5:s1} holds if \(\Phi|_{Z_X}\) is linear, and there is a linear subspace \(W_X\) of \(X\) such that 
\begin{equation}\label{t3.5:e2}
X = W_X \oplus Z_X,
\end{equation}
and \(\Phi(W_X)\) is a linear subspace, and \eqref{t3.2:e2} holds for all \(w \in W_X\) and \(z \in Z_X\).

Let us consider an arbitrary linear subspace \(W_X\) of \(X\) satisfying \eqref{t3.5:e2}. The identical embedding \(I_W \colon W_X \to X\), 
\[
I_W(w) = w \text{ for every } w \in W_X,
\]
is a pseudo-isometry of \(W_X\) and \(X\). Consequently, the mapping
\[
W_X \xrightarrow{I_{W_X}} X \xrightarrow{\Phi} Y
\]
is also a pseudo-isometry by Lemma~\ref{l3.4}. Now, using Lemma~\ref{l3.4} again, we see that this pseudo-isometry is a surjection. In particular, \(\Phi(W_X)\) is a linear subspace of \(Y\), because \(\Phi(W_X) = Y\) holds.

Equality~\eqref{t3.5:e1} and Definition~\ref{d2.5} give us the inclusion
\begin{equation}\label{t3.5:e3}
\Phi(Z_X) \subseteq Z_Y.
\end{equation}
Since \(Y\) is a normed vector space, we have \(Z_Y = \{0_Y\}\), that, together with~\eqref{t3.5:e3}, implies the linearity of \(\Phi(Z_X)\).

To complete the proof  it suffices to show that 
\begin{equation}\label{t3.5:e4}
\Phi(w + z) = \Phi(w) + \Phi(z)
\end{equation}
holds for all \(w \in W_X\) and \(z \in Z_X\). To do it we note that \eqref{t3.5:e4} can be written in the form 
\begin{equation}\label{t3.5:e5}
\Phi(w + z) = \Phi(w),
\end{equation}
because \(\Phi(Z_X) \subseteq Z_Y = \{0_Y\}\). Since \(Y\) is a normed vector space, \eqref{t3.5:e5} holds if and only if 
\begin{equation}\label{t3.5:e6}
\vnorm{\Phi(w + z) - \Phi(w)}_Y = 0.
\end{equation}
By equality~\eqref{ex2.2:e1} and Definition~\ref{d2.5}, we have
\[
0 = \vnorm{z}_X = \vnorm{(w+z)-w}_X = \vnorm{\Phi(w + z) - \Phi(w)}_Y,
\]
which implies \eqref{t3.5:e5}.
\end{proof}

\section{Strict convexity}
\label{sec4}

Recall that a normed vector space \((X, \vnorm{\cdot})\) is \emph{strictly convex} if, for all \(x\), \(y \in X\), the equality 
\begin{equation*}
\vnorm{x+y} = \vnorm{x} + \vnorm{y}
\end{equation*}
implies \(x = ty\) for some \(t \geqslant 0\). 

It was proved by J. A. Baker \cite{Bak1971AMM} that a normed real vector space \(Y\) is strictly convex if and only if, for every normed real vector space \(X\), each isometric embedding \(X \to Y\) is affine. (For other characterizations of the strictly convex normed vector spaces see \cite{GS1976PAMS} and references therein.)

It seems to be interesting to generalize the concept of strict convexity and to find an analog of Baker's result for seminormed vector spaces.

\section*{Funding}

Oleksiy Dovgoshey was supported by Volkswagen Stiftung Project ``From Modeling and Analysis to Approximation''.


\begin{thebibliography}{10}
\bibitem{Bak1971AMM}
J.~A. Baker.
\newblock Isometries in normed spaces.
\newblock {\em Amer. Math. Monthly}, 78(6):655--658, 1971.

\bibitem{BD2022BKMSip}
V.~Bilet and O.~Dovgoshey.
\newblock When all permutations are combinatorial similarities.
\newblock {\em Bull. Korean Math. Soc. (in print)}, 2022.

\bibitem{BD2023arXiv}
V.~Bilet and O.~Dovgoshey.
\newblock Pseudometrics and partitions,
\newblock {\em arXiv: 2304.03822v1}: 1--28, 2023.


\bibitem{DovBBMSSS2020}
O.~Dovgoshey.
\newblock Combinatorial properties of ultrametrics and generalized
  ultrametrics.
\newblock {\em Bull. Belg. Math. Soc. Simon Stevin}, 27(3):379--417, 2020.

\bibitem{DLAMH2020}
O.~Dovgoshey and J.~Luukkainen.
\newblock Combinatorial characterization of pseudometrics.
\newblock {\em Acta Math. Hungar}, 161(1):257--291, 2020.

\bibitem{GS1976PAMS}
S.~Gudder and D.~Strawther.
\newblock Strictly convex normed linear spaces.
\newblock {\em Proc. Amer. Math. Soc.}, 59(2):263--267, 1976.

\bibitem{Kelley1975}
J.~L. Kelley.
\newblock {\em {General Topology}}, volume~27 of {\em Graduate Texts in
  Mathematics}.
\newblock Springer-Verlag, New York -- Heidelberg -- Berlin, 1975.

\bibitem{Kur1934CRASP}
{\DJ}.~Kurepa.
\newblock Tableaux ramifi\'{e}s d'ensemples, espaces pseudodistacies.
\newblock {\em C. R. Acad. Sci. Paris}, 198:1563--1565, 1934.

\bibitem{MU1932CRASP}
S.~Mazur and S.~M. Ulam.
\newblock Sur les transformations isom\'{e}triques d'espaces vectoriels
  norm\'{e}s.
\newblock {\em C. R. Acad. Sci. Paris}, 194:946--948, 1932.

\bibitem{Nica2012EM}
B.~Nica
\newblock The Mazur-Ulam theorem
\newblock {\em Expo. Math.}, 30(4):397--398, 2012.

\bibitem{PR1968}
D.~Przeworska-Rolewicz and S.~Rolewicz.
\newblock {\em {Equations in Linear Spaces}}, volume~47 of {\em Monografie
  Matematyczne}.
\newblock Pa\'{n}stwowe Wydawnictwo Naukowe, Warszawa, 1968.

\bibitem{Rud1991}
V.~Rudin.
\newblock {\em {Functional Analysis}}.
\newblock McGraw-Hill, Inc., New York, second edition, 1991.

\bibitem{Vaei2003AMM}
J.~V\"{a}is\"{a}l\"{a}.
\newblock {A proof of the Mazur--Ulam theorem}.
\newblock {\em Amer. Math. Mon.}, 110(7):633--635, 2003.

\end{thebibliography}

\end{document}